\documentclass[reqno,11pt]{article}
\usepackage{a4wide}
\usepackage{color,eucal,enumerate,mathrsfs,amsthm}
\usepackage[normalem]{ulem}
\usepackage{amsmath,amssymb,amsthm,epsfig,bbm,makecell}
\numberwithin{equation}{section}

\usepackage{amsfonts}
\usepackage{dsfont}
\usepackage{mathtools}
\usepackage{leftidx}
\usepackage{graphicx}
\usepackage{esint}
\usepackage{authblk}
\usepackage{multirow}


\usepackage[pdfborder={0 0 0}]{hyperref}  

\usepackage[latin1]{inputenc}
\usepackage[english]{babel}

\frenchspacing

\newtheorem{Definition}{Definition}[section]
\newtheorem{Proposition}[Definition]{Proposition}
\newtheorem{Lemma}[Definition]{Lemma}
\newtheorem{Theorem}[Definition]{Theorem}
\newtheorem{Corollary}[Definition]{Corollary}
\theoremstyle{definition}

\newtheorem{Remark}[Definition]{Remark}
\newtheorem{Setting}[Definition]{Setting}

\allowdisplaybreaks

%

\newcommand{\N}{\mathbb{N}}

\newcommand{\R}{\mathbb{R}}



\newcommand{\mm}{{\mbox{\boldmath$m$}}}









\newcommand{\sfd}{{\sf d}}

\newcommand{\sfh}{{\sf h}}

\newcommand{\Kliminf}{K\kern-3pt-\kern-2pt\mathop{\rm lim\,inf}\limits}  
\newcommand{\supp}{\mathop{\rm supp}\nolimits}   
\newcommand{\Lip}{\mathop{\rm Lip}\nolimits}          
\renewcommand{\d}{{\mathrm d}}
\newcommand{\dt}{{\d t}}
\newcommand{\ddt}{{\frac \d\dt}}

\newcommand{\restr}[1]{\lower3pt\hbox{$|_{#1}$}} 


\newcommand{\eps}{\varepsilon}  
\newcommand{\nchi}{{\raise.3ex\hbox{$\chi$}}}


\setlength{\marginparwidth}{3cm}

\newcommand{\fr}{\penalty-20\null\hfill$\blacksquare$}                      



\newcommand{\prob}[1]{\mathscr P(#1)}                   
\newcommand{\probt}[1]{\mathscr P_2(#1)}                   

\renewcommand{\mm}{\mathfrak m}                                



\renewenvironment{proof}{\removelastskip\par\medskip   
\noindent{\em proof} \rm}{\penalty-20\null\hfill$\square$\par\medbreak}

\newcommand{\X}{{\rm X}}

\newcommand{\h}{{\sfh}}

\renewcommand{\ae}{{\textrm{\rm{-a.e.}}}}

\newcommand{\CD}{{\sf CD}}
\newcommand{\RCD}{{\sf RCD}}

\newcommand{\lip}{{\rm lip}}

\newcommand{\HS}{{\lower.3ex\hbox{\scriptsize{\sf HS}}}}

\newcommand{\hr}{{\sf r}}

\setcounter{tocdepth}{3}

\title{From Harnack inequality to heat kernel estimates on metric measure spaces and applications}
\author{Luca Tamanini \thanks{Institut f\"ur Angewandte Mathematik, Universit\"at Bonn. email: tamanini@iam.uni-bonn.de}}

\begin{document}

\maketitle

\begin{abstract}
Aim of this short note is to show that a dimension-free Harnack inequality on an infinitesimally Hilbertian metric measure space where the heat semigroup admits an integral representation in terms of a kernel is sufficient to deduce a sharp upper Gaussian estimate for such kernel. As intermediate step, we prove the local logarithmic Sobolev inequality (known to be equivalent to a lower bound on the Ricci curvature tensor in smooth Riemannian manifolds). Both results are new also in the more regular framework of $\RCD(K,\infty)$ spaces.
\end{abstract}

\tableofcontents

\section{Introduction}

In a smooth Riemannian manifold $(M,g,{\rm vol})$ with Ricci curvature bounded from below it is well-known \cite{Sturm92} that the heat kernel $\hr_t$, namely the positive fundamental solution of the heat equation defined by the Laplace-Beltrami operator (or, from a probabilistic viewpoint, the transition probability of the Brownian motion), satisfies two-sided Gaussian estimates, which read as
\begin{equation}
\label{eq:gaussest}
\frac{1}{C_1{\rm vol}(B_{\sqrt t}(y))}\exp\Big(-\frac{\sfd^2(x,y)}{(4-\eps)t} - C_2 t\Big)\leq \hr_t[x](y)\leq \frac{C_1}{{\rm vol}(B_{\sqrt t}(y))}\exp\Big(-\frac{\sfd^2(x,y)}{(4+\eps)t} + C_2 t\Big)
\end{equation}
for every $\eps>0$, $x,y\in \X$ and $t>0$, for suitable positive constants $C_1=C_1(K,N,\delta)$ and $C_2=C_2(K,N,\delta)$, where $K$ is a lower bound on the Ricci curvature and $N$ an upper bound for the dimension. But because of the great interest in the study of metric (measure) spaces (we refer to \cite{Heinonen07} for an overview on the topic and detailed bibliography) and the essentially metric-measure nature of \eqref{eq:gaussest}, it is rather natural to investigate possible generalizations of these kind of bounds to more abstract settings. For regular, symmetric, strongly local Dirichlet forms on locally compact separable Hausdorff spaces satisfying doubling \& Poincar\'e, this has been achieved by K.-T.\ Sturm in \cite{Sturm96II}, while more recently R.\ Jiang, H.\ Li and H.\ Zhang studied the problem on finite-dimensional $\RCD^*(K,N)$ spaces \cite{JLZ15}, which are still locally compact but only locally doubling for $K<0$. This class of metric measure spaces with Ricci curvature bounded from below, introduced in \cite{AmbrosioGigliSavare11-2}, \cite{Gigli12} starting from the seminal papers \cite{Lott-Villani09}, \cite{Sturm06I}, \cite{Sturm06II}, is the natural framework where bounds like \eqref{eq:gaussest} can be expected to hold, since it enjoys many analytical and geometric properties and the existence of the heat kernel is well understood.

\medskip

As concerns weighted Riemannian manifolds $(M,g,\mu)$, the picture is not so clear, as to the best of our knowledge no lower bounds are known (unless $M$ has finite volume, see \cite{Wang11}), whereas an upper estimate can still be deduced \cite{Grigoryan94b}, \cite{GongWang01} and reads as
\begin{equation}\label{eq:gaussest2}
\hr_t[x](y) \leq \frac{1}{\sqrt{\mu(B_{\sqrt{t}}(x))\mu(B_{\sqrt{t}}(y))}}\exp\Big(C_\eps(1+C_K t) - \frac{\sfd^2(x,y)}{(4+\eps)t}\Big)
\end{equation}
for every $\eps>0$, $x,y\in \X$ and $t>0$; notice however that in general it is not possible to get rid of one of the two volumes on the right-hand side, since $M$ does not need to be doubling. As in the previous discussion, also \eqref{eq:gaussest2} has an essentially metric-measure nature and if on the one hand the non-smooth counterpart of smooth Riemannian manifolds with lower Ricci bounds is given by $\RCD^*(K,N)$ spaces, on the other hand $\RCD(K,\infty)$ spaces represent the natural alternative to weighted manifolds and in this setting \eqref{eq:gaussest2} is still missing; therefore it would be natural, as a first attempt, to prove \eqref{eq:gaussest2} on any $\RCD(K,\infty)$ space. 

Yet, after a careful look at \cite{JLZ15} one can observe that the key role in the proof of \eqref{eq:gaussest} is played by the dimension-dependent Harnack inequality \cite{GarofaloMondino14}, \cite{Jiang15} and that not surprisingly in \cite{GongWang01} the dimension-free Harnack inequality, which is known to hold on $\RCD(K,\infty)$ spaces after \cite{Li16}, is a crucial ingredient as well. For this reason, instead of establishing \eqref{eq:gaussest2} on an $\RCD(K,\infty)$ space, we prefer to work in a more general environment, namely a metric measure space $(\X,\sfd,\mm)$ supporting a dimension-free Harnack inequality where the heat semigroup admits an integral representation (see Setting \ref{set} below). In order to achieve this goal we obtain a local logarithmic Sobolev inequality, that to the best of our knowledge was missing also in the context of $\RCD(K,\infty)$ spaces, and the $L^\infty$-${\rm LIP}$ regularization for the heat flow (see Proposition \ref{pro:loclogsob}).

\medskip

The paper is structured as follows. In Section \ref{sec:2} we recall all the relevant notions, results and bibliographical references related to calculus on metric measure spaces and point out the precise framework we shall work within. In Section \ref{sec:3} we collect some auxiliary results, most notably the local logarithmic Sobolev inequality. Finally, Section \ref{sec:4} is devoted to the proof of the upper Gaussian estimate for the heat kernel and some consequences.

\bigskip

\noindent{\bf Acknowledgements} 

The author gratefully acknowledges support by the European Union through the ERC-AdG ``RicciBounds'' for Prof.\ K.\ T.\ Sturm and would like to thank Prof.\ M.\ Gordina for useful suggestions.

\section{Preliminaries and setting}\label{sec:2}

By ${\rm LIP}(\X)$ we denote the space of Lipschitz continuous functions and by $C([0,1],\X)$ the space of continuous curves with values in the metric space $(\X,\sfd)$. For the notion of absolutely continuous curve in a metric space and of metric speed see for instance Section 1.1 in \cite{AmbrosioGigliSavare08}. The collection of absolutely continuous curves on $[0,1]$ is denoted by $AC([0,1],\X)$. By $\prob\X$ we denote the space of Borel probability measures on $(\X,\sfd)$ and by $\probt\X \subset \prob\X$ the subclass of those with finite second moment.

\medskip

Let $(\X,\sfd,\mm)$ be a complete and separable metric measure space endowed with a Borel non-negative measure which is finite on bounded sets. 

For the definition of the {\bf Sobolev class} $S^2(\X)$ and of {\bf minimal weak upper gradient} $|D f|$ see \cite{AmbrosioGigliSavare11} (and the previous works \cite{Cheeger00}, \cite{Shanmugalingam00} for alternative - but equivalent - definitions of Sobolev functions). The Sobolev space $W^{1,2}(\X)$ is defined as $L^2(\X)\cap S^2(\X)$. When endowed with the norm $\|f\|_{W^{1,2}}^2:=\|f\|_{L^2}^2+\||Df|\|_{L^2}^2$, $W^{1,2}(\X)$ is a Banach space. The {\bf Cheeger energy} is the convex and lower-semicontinuous functional $E:L^2(\X)\to[0,\infty]$ given by
\[
E(f):=\left\{\begin{array}{ll}
\displaystyle{\frac12\int|D f|^2\,\d\mm}&\qquad \text{for }f\in W^{1,2}(\X)\\
+\infty&\qquad\text{otherwise}
\end{array}\right.
\]
$(\X,\sfd,\mm)$ is {\bf infinitesimally Hilbertian} (see \cite{Gigli12}) if $W^{1,2}(\X)$ is Hilbert. In this case the {\bf cotangent module} $L^2(T^*\X)$ (see \cite{Gigli14}) and its dual, the {\bf tangent module} $L^2(T\X)$, are canonically isomorphic, the {\bf differential} is a well-defined linear map $\d$ from $S^2(\X)$ with values in $L^2(T^*\X)$ and the isomorphism sends the differential $\d f$ to the gradient $\nabla f$. Furthermore $E$ is a Dirichlet form admitting a carr\'e du champ given by $\langle\nabla f,\nabla g\rangle$, where $\langle\cdot,\cdot\rangle$ is the pointwise scalar product on the Hilbert module $L^2(T\X)$. The infinitesimal generator $\Delta$ of $E$, which is a closed self-adjoint linear operator on $L^2(\X)$, is called {\bf Laplacian} on $(\X,\sfd,\mm)$ and its domain denoted by $D(\Delta)\subset W^{1,2}(\X)$. A function $f \in W^{1,2}(\X)$ belongs to $D(\Delta)$ and $g = \Delta f$ if and only if
\[
\int\phi g\,\d\mm = -\int\langle\nabla\phi,\nabla f\rangle\,\d\mm, \qquad \forall \phi \in W^{1,2}(\X).
\]
The flow $(\h_t)$ associated to $E$ is called {\bf heat flow} (see \cite{AmbrosioGigliSavare11}), and for any $f\in L^2(\X)$ the curve $t\mapsto\h_tf\in L^2(\X)$ is continuous on $[0,\infty)$, locally absolutely continuous on $(0,\infty)$ and the only solution of
\[
\ddt\h_tf = \Delta\h_tf, \qquad \h_tf \to f\text{ as }t \downarrow 0.
\]
After this preliminary part, we can describe the framework we shall work within throughout this note.

\begin{Setting}\label{set}
$(\X,\sfd,\mm)$ is a complete and separable metric space equipped with a non-negative Borel measure which is finite on bounded sets and supports the following dimension-free Harnack inequality: for any $p \in (1,\infty)$, $x,y \in \X$ and for any $f \in L^1 \cap L^\infty(\X)$ it holds
\begin{equation}\label{eq:harnack}
|\h_t f(x)|^p \leq (\h_t|f|^p)(y)\exp\Big(\frac{pK\sfd^2(x,y)}{2(p-1)(e^{2Kt}-1)}\Big)
\end{equation}
with $K \in \R$. We also assume that there exists a function
\begin{equation}
\label{eq:hk}
(0,\infty)\times \X^2\ni (t,x,y)\quad\mapsto\quad \hr_t[x](y) \in (0,\infty)
\end{equation}
called heat kernel, such that for every $x \in \X$ and $t>0$, $\hr_t[x]$ is a probability density, and the following identity holds
\begin{equation}
\label{eq:rapprform}
\h_tf(x) = \int f(y)\hr_t[x](y)\,\d\mm(y) \qquad \forall t>0,\, \forall f \in L^2(\X).
\end{equation}
\end{Setting}

By the arguments in \cite{AmbrosioGigliSavare11-2} and \cite{AmbrosioGigliMondinoRajala15} with slight adaptations, the semigroup property of $\h_t$ and the representation formula \eqref{eq:rapprform} entail that the heat kernel is symmetric, i.e.\ $\hr_t[x](y) = \hr_t[y](x)$ $\mm \otimes \mm$-a.e.\ in $\X^2$, and satisfies the Chapman-Kolmogorov formula
\begin{equation}
\label{eq:chapkol}
\hr_{t+s}[x](y) = \int \hr_t[x](z)\hr_s[z](y)\d\mm(z) \qquad \textrm{for }\mm \otimes \mm\textrm{-a.e. }(x,y) \in \X^2,\,\forall t,s \geq 0.
\end{equation}
Moreover, \eqref{eq:rapprform} and the fact that $\hr_t[x]$ is a probability density can also be used to extend the heat flow to $L^1(\X)$, show that it is {\bf mass preserving} and satisfies the {\bf maximum principle}, i.e.
\[
f\leq c\quad\mm-a.e.\qquad\qquad\Rightarrow \qquad\qquad\h_tf\leq c\quad\mm\ae,\ \forall t>0.
\]
Finally, by Proposition 4.1 in \cite{DPRW09} the dimension-free Harnack inequality \eqref{eq:harnack} implies the strong Feller property for $\h_t$ for all $t>0$, namely if $f \in L^\infty(\X)$, then $\h_t f$ is continuous and bounded.

\medskip

Let us recall that the minimal weak upper gradient is local (i.e.\ $|Df| = |Dg|$ $\mm$-a.e.\ on $\{f=g\}$), lower semicontinuous w.r.t.\ $\mm$-a.e.\ convergence and that Lipschitz functions with bounded support are dense in $L^p(\X)$, $p \in (1,\infty)$ (see \cite{AmbrosioGigliSavare11-3}, where the density is actually proved in $W^{1,p}(\X)$).

As regards the properties of the differential, the following calculus rules (see \cite{Gigli14} for the proof) will be used extensively without further notice:
\begin{align*}
|\d f|&=|D f|\quad\mm\ae&&\forall f\in S^2(\X)\\
\d f&=\d g\qquad\mm\ae\ \text{\rm on}\ \{f=g\} &&\forall f,g\in S^2(\X)\\
\d(\varphi\circ f)&=\varphi'\circ f\,\d f&&\forall f\in S^2(\X),\ \varphi:\R\to \R\ \text{Lipschitz}\\
\d(fg)&=g\,\d f+f\,\d g&&\forall f,g\in L^\infty\cap S^2(\X)
\end{align*}
where it is part of the properties the fact that $\varphi\circ f,fg\in S^2(\X)$ for $\varphi,f,g$ as above.

Finally, given $f : \X \to \R$, the {\bf local Lipschitz constant} $\lip(f):\X\to[0,\infty]$ is defined as 0 on isolated points and otherwise as
\[
\lip f(x) := \limsup_{y \to x}\frac{|f(x) - f(y)|}{d(x,y)}.
\]
If $f$ is Lipschitz, then its Lipschitz constant is denoted by $\Lip(f)$. It is worth stressing that if $f$ is Lipschitz with $\lip(f) \in L^2(\X)$, then $f \in S^2(\X)$ with
\begin{equation}\label{eq:mwug-lip}
|D f| \leq \lip(f), \quad \mm\textrm-a.e.
\end{equation}

\begin{Remark}[Lipschitz cut-off function]\label{rem:cutoff}
{\rm
Given a complete and separable metric measure space $(\X,\sfd,\mm)$ equipped with a non-negative Borel measure which is finite on bounded sets, for any $x \in \X$ and $r>0$ there exists a Lipschitz function $\nchi_r : \X \to [0,1]$ such that $\nchi_r \equiv 1$ on $B_r(x)$, $\supp(\nchi_r) \subset B_{r+1}(x)$ and $\|\lip(\nchi_r)\|_{L^\infty(\X)}$ does not depend on $r$.

Indeed, if $\eta \in {\rm LIP}(\R)$ with bounded support, $\eta \equiv 1$ on $[0,1/3]$, $\eta \equiv 0$ on $[2/3,\infty)$ and set $\nchi_r := \eta \circ \sfd(\cdot,B_r(x))$, where $\sfd(\cdot,B_r(x)) := \inf_{y \in B_r(x)}\sfd(\cdot,y)$, then it is easy to see that $\nchi_r \equiv 1$ on $B_{r+1/3}(x)$, $\supp(\nchi_r) \subset B_{r+2/3}(x)$ and is Lipschitz with $\lip(\nchi_r) \leq |\eta'(\sfd(\cdot,B_r(x)))|\leq C$ $\mm$-a.e.\ with $C$ independent of $r$.
}\fr
\end{Remark}

\section{Auxiliary results}\label{sec:3}

In this section we collect all technical results that are required in view of the proof of Theorem \ref{thm:main}. We begin with a continuity statement for the heat semigroup.

\begin{Lemma}\label{lem:heatLp}
With the same assumptions and notations as in Setting \ref{set}, let $f \in L^1 \cap L^\infty(\X)$. Then $(\h_t f) \in C([0,\infty),L^p(\X))$ for all $p \in [1,\infty)$.

In particular, for all $T>0$, $p \in [1,\infty)$ and $\eps > 0$ there exists a bounded open set $B$ such that
\begin{equation}\label{eq:integrability}
\int_{\X \setminus B} |\h_t f|^p\,\d\mm < \eps, \qquad \forall t \in [0,T].
\end{equation}
\end{Lemma}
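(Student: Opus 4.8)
The plan is to prove continuity at each fixed time $t_0 \in [0,\infty)$ in every $L^p$, $p \in [1,\infty)$, and then to extract the uniform integrability statement \eqref{eq:integrability} as a consequence via a tightness/cut-off argument. For the continuity claim I would argue in two regimes. First, for $t_0 > 0$: since $\h_t$ is mass-preserving on $L^1(\X)$ and satisfies the maximum principle, $\|\h_t f\|_{L^1} \le \|f\|_{L^1}$ and $\|\h_t f\|_{L^\infty} \le \|f\|_{L^\infty}$ for all $t \ge 0$, so the orbit $\{\h_t f : t \ge 0\}$ is bounded in $L^1 \cap L^\infty$, hence in every $L^p$. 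Continuity in $L^2$ on $(0,\infty)$ is part of the standard theory of the heat flow recalled in Section \ref{sec:2} (the curve $t \mapsto \h_t f$ is locally absolutely continuous into $L^2$). For $p \in (1,\infty)$, interpolate: $\|\h_t f - \h_s f\|_{L^p} \le \|\h_t f - \h_s f\|_{L^2}^{\theta}\,\|\h_t f - \h_s f\|_{L^\infty}^{1-\theta}$ when $p \ge 2$, which $\to 0$ as $s \to t$ since the first factor does and the second is bounded by $2\|f\|_{L^\infty}$; for $1 \le p < 2$ interpolate between $L^1$ and $L^2$ instead, using $\|\h_t f - \h_s f\|_{L^1} \to 0$ on $(0,\infty)$ (again from the $L^2$-AC theory together with the $L^1$ contraction, or directly from the $L^1$-theory of the extended semigroup).

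The delicate point, and what I expect to be the main obstacle, is continuity at $t_0 = 0$, i.e.\ $\h_t f \to f$ in $L^p$ as $t \downarrow 0$. In $L^2$ this is built into the definition of the heat flow. To upgrade to $L^p$, $p > 2$, one cannot naively interpolate against $L^\infty$ because $\h_t f$ need not converge to $f$ in $L^\infty$. Instead I would use that $\h_t f \to f$ in $L^2$, hence (along a subsequence) $\mm$-a.e., while $|\h_t f| \le \|f\|_{L^\infty}$ uniformly; since $\mm$ is finite on bounded sets and $f \in L^1 \cap L^\infty$, one fixes a large ball $B$ carrying most of the $L^p$-mass of $f$, applies dominated convergence on $B$ (the functions are dominated by the constant $\|f\|_{L^\infty}^{p}$, which is integrable on $B$), and controls the complement by combining mass preservation in $L^1$ with the $L^\infty$ bound: $\int_{\X \setminus B} |\h_t f|^p \le \|f\|_{L^\infty}^{p-1}\int_{\X\setminus B}|\h_t f|\,\d\mm$, and the latter is made small uniformly in $t \in [0,T]$ by the tightness argument below. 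A subsequence-free conclusion then follows by the usual Urysohn-type argument. The case $1 \le p < 2$ at $t_0 = 0$ is easier: $L^1$ convergence follows from $L^2$ convergence plus domination on balls and $L^1$-mass preservation on complements, and then $L^p$ convergence for $1 < p < 2$ follows by interpolation between $L^1$ and $L^\infty$.

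For \eqref{eq:integrability}, the key observation is that it is a tightness statement, and it reduces to the case $p = 1$ by the bound $\int_{\X\setminus B}|\h_t f|^p \le \|f\|_{L^\infty}^{p-1}\int_{\X\setminus B}|\h_t f|\,\d\mm$ just used. For $p = 1$: the map $t \mapsto \h_t f$ is continuous into $L^1(\X)$ on the compact interval $[0,T]$ (by the continuity statement already proved), hence $\{\h_t f : t \in [0,T]\}$ is a compact subset of $L^1(\X)$, therefore uniformly integrable and in particular tight — for every $\eps > 0$ there is a bounded Borel set, which one enlarges slightly to a bounded open set $B$, with $\int_{\X \setminus B}|\h_t f|\,\d\mm < \eps/\|f\|_{L^\infty}^{p-1}$ for all $t \in [0,T]$. (Concretely: cover the compact set by finitely many $\eps/2$-balls centred at functions $g_1,\dots,g_n$, choose $B$ so that $\int_{\X\setminus B}|g_i|\,\d\mm < \eps/2$ for each $i$, which is possible since each $g_i \in L^1$ and $\mm$ is $\sigma$-finite on the separable space $\X$; then $\int_{\X\setminus B}|\h_t f|\,\d\mm < \eps$ for all $t \in [0,T]$.) This gives \eqref{eq:integrability} and completes the proof.
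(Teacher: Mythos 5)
Your overall strategy is sound and, once the logic is untangled, leads to a correct proof following essentially the same ingredients as the paper (interpolation for $p\ge 2$, a Scheff\'e-type argument for $p=1$, compactness of the orbit in $L^1$ to get uniform integrability). But two points need fixing.

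First, the reason you give for treating $t_0=0$ separately when $p>2$ is a misconception. You write that one ``cannot naively interpolate against $L^\infty$ because $\h_t f$ need not converge to $f$ in $L^\infty$'', and you therefore build an elaborate tightness-plus-dominated-convergence workaround. But the interpolation
\[
\|\h_s f - \h_t f\|_{L^p}^p \le \|\h_s f - \h_t f\|_{L^\infty}^{p-2}\,\|\h_s f - \h_t f\|_{L^2}^2
\]
only requires a \emph{uniform} $L^\infty$ bound on the differences, which the maximum principle gives for all $t,s\ge 0$ (namely $\|\h_s f - \h_t f\|_{L^\infty}\le 2\|f\|_{L^\infty}$); it does not require $L^\infty$-convergence. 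Since $(\h_t f)\in C([0,\infty),L^2)$ holds down to $t=0$ by the very definition of the heat flow, this interpolation settles the $p\ge 2$ case on all of $[0,\infty)$ at once, exactly as the paper does, and your special treatment at $t_0=0$ for $p>2$ is unnecessary.

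Second, the $L^1$-continuity on $(0,\infty)$ is asserted with a muddled rationale: ``from the $L^2$-AC theory together with the $L^1$ contraction'' does not by itself give $L^1$-convergence in infinite measure, and ``directly from the $L^1$-theory of the extended semigroup'' begs the question, since that is precisely what one is proving. What is true, and what saves you, is that your own dominated-convergence-plus-mass-preservation argument at $t_0=0$ (which is essentially Scheff\'e's lemma; the paper phrases it via Brezis--Lieb) combines with the $L^1$-contraction and the semigroup identity $\h_t f - \h_s f = \h_s(\h_{t-s}f - f)$, $s<t$, to give $L^1$-continuity on all of $[0,\infty)$. You should state that reduction explicitly instead of the vague parenthetical, and you should also record the standard reduction to $f\ge 0$ (split $f=f^+-f^-$), since mass preservation controls $\int \h_t g\,\d\mm$ but not $\int |\h_t g|\,\d\mm$ for signed $g$.

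With those two repairs your argument is clean and indeed slightly more elementary than the paper's in two spots: reducing \eqref{eq:integrability} to $p=1$ via $\int_{\X\setminus B}|\h_t f|^p \le \|f\|_{L^\infty}^{p-1}\int_{\X\setminus B}|\h_t f|\,\d\mm$ avoids having to re-prove $L^1$-continuity for $t\mapsto|\h_t f|^p$ (the paper does this via a second application of Brezis--Lieb), and your finite $\eps/2$-net covering of the compact orbit gives uniform integrability directly without invoking Dunford--Pettis.
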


\begin{proof}
As a first step, recall that the heat flow satisfies the maximum principle and $(\h_t f) \in C([0,\infty),L^2(\X))$. Therefore, if $p \geq 2$ it is sufficient to observe that
\[
\int |\h_s f - \h_t f|^p\d\mm \leq \|\h_s f - \h_t f\|_{L^\infty(\X)}^{p-2}\int |\h_s f - \h_t f|^2\d\mm \leq 2\|f\|_{L^\infty(\X)}^{p-2} \|\h_s f - \h_t f\|_{L^2(\X)}^2
\]
for all $s,t \geq 0$ to deduce the $L^p$-continuity. For $p = 1$ we rely on Brezis-Lieb's lemma (see \cite{BrezisLieb83}); for our purposes it is enough to know that if $(u_n) \subset L^1(\X)$ is a bounded sequence with $u_n \to u$ $\mm$-a.e.\ for some measurable function $u$, then $u \in L^1(\X)$ and
\begin{equation}\label{eq:brezislieb}
\lim_{n \to \infty}\int\Big(|u_n| - |u_n - u|\Big)\d\mm = \int |u|\d\mm.
\end{equation}
Hence pick any $t \geq 0$, any sequence $(t_n)$ converging to it and set $u_n := \h_{t_n}f$. From $(\h_t f) \in C([0,\infty),L^2(\X))$ we know that, up to pass to a subsequence, $u_n \to u = \h_t f$ $\mm$-a.e.\ and from the mass-preserving property of the heat flow $\|u_n\|_{L^1(\X)} = \|u\|_{L^1(\X)}$. Therefore \eqref{eq:brezislieb} yields $\|u_n - u\|_{L^1(\X)} \to 0$ as $n \to \infty$ and by the arbitrariness of $(t_n)$ we conclude that $(\h_t f) \in C([0,\infty),L^1(\X))$. For $p \in (1,2)$, it is sufficient to notice that, by interpolation, there exists $C_p$ such that
\[
\|\h_s f - \h_t f\|_{L^p(\X)} \leq C_p\Big(\|\h_s f - \h_t f\|_{L^1(\X)} + \|\h_s f - \h_t f\|_{L^2(\X)}\Big), \qquad \forall s,t \geq 0.
\]
As regards \eqref{eq:integrability}, we first observe that if we set $u_n := |\h_{t_n}f|^p$ and $u := |\h_t f|^p$ with $t_n \to t$, then the $L^p$-continuity of the heat flow implies that, up to subsequences, $u_n \to u$ $\mm$-a.e.\ and $\|u_n\|_{L^1(\X)} \to \|u\|_{L^1(\X)}$ as $n \to \infty$. Plugging this information into \eqref{eq:brezislieb} we deduce that $(|\h_t f|^p) \in C([0,\infty),L^1(\X))$ and in turn this implies that $K_T := \{ |\h_t f|^p \,:\, t \in [0,T]\} \subset L^1(\X)$ is compact for all $T \geq 0$. Indeed, if $(u_n) \subset K_T$, then $u_n = |\h_{t_n}f|^p$ for some $t_n \in [0,T]$ and $(t_n)$ admits a convergent subsequence, say $t_{n_k} \to t$, so that $u_{n_k} \to u := |\h_t f|^p \in K_T$ in $L^1(\X)$. In particular $K_T$ is weakly compact in $L^1(\X)$ and, by the Dunford-Pettis theorem for $\sigma$-finite measures, this implies that for every $\eps > 0$ there exists a measurable set $B$ with finite measure such that
\[
\int_{\X \setminus B} |\h_t f|^p\,\d\mm < \eps, \qquad \forall t \in [0,T].
\]
In order to see that $B$ can be replaced by a bounded set, by the maximum principle and the fact that $f \in L^\infty(\X)$ we know that, for some constant $C>0$, $|\h_t f|^p \leq C$ for all $t \geq 0$ and, since $\mm(B) < \infty$, for all $x \in \X$ there exists $R>0$ such that $\mm(B \setminus B_R(x)) < \eps/C$. Therefore, fixing $x \in \X$ and setting $B' := B \cap B_R(x)$ we see that
\[
\int_{\X \cap B'} |\h_t f|^p\,\d\mm = \int_{\X \setminus B} |\h_t f|^p\,\d\mm + \int_{B \setminus B_R(x)} |\h_t f|^p\,\d\mm < 2\eps,
\]
whence the conclusion by the arbitrariness of $\eps$.
\end{proof}

Then we show that the dimension-free Harnack inequality implies the local logarithmic Sobolev inequality, which is the non-smooth counterpart of the main result contained in \cite{ATW09}. On smooth Riemannian manifolds \cite{Wang04} and in the setting of Bakry-\'Emery $\Gamma$-calculus \cite{BakryGentilLedoux14}, both \eqref{eq:harnack} and \eqref{eq:quasi-hamilton} are known to be equivalent to the curvature-dimension condition $\CD(K,\infty)$ (\cite{Sturm06I}, see also Section \ref{sec:4} for the definition), but in the present framework the implication \eqref{eq:harnack} $\Rightarrow$ \eqref{eq:quasi-hamilton} was still missing. As a byproduct we improve the strong Feller property of $\h_t$ to an $L^\infty$-${\rm LIP}$ regularization.

\begin{Proposition}\label{pro:loclogsob}
With the same assumptions and notations as in Setting \ref{set}, $\h_t$ maps $L^\infty(\X)$ into ${\rm LIP}(\X)$ and for any $f \in L^\infty$ positive, $t>0$ it holds
\begin{equation}\label{eq:linfty-lip}
\lip(\h_t f)^2 \leq \frac{2K}{e^{2Kt}-1}\h_t f\big(\h_t(f\log f) - \h_t f\log\h_t f\big) \qquad \textrm{pointwise.}
\end{equation}
In addition, for any $f \in L^p(\X)$ positive, $p \in (1,\infty)$, and for any $t>0$ it holds
\begin{equation}\label{eq:quasi-hamilton}
|D\h_t f|^2 \leq \frac{2K}{e^{2Kt}-1}\h_t f\big(\h_t(f\log f) - \h_t f\log\h_t f\big) \qquad \mm\textrm{-a.e.}
\end{equation}
\end{Proposition}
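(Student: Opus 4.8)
The plan is to extract the pointwise bound \eqref{eq:linfty-lip} from the Harnack inequality \eqref{eq:harnack} by letting the exponent $p$ decrease to $1$ while, at the same time, letting the two points collapse onto each other; then \eqref{eq:quasi-hamilton} follows from \eqref{eq:linfty-lip}, the comparison $|Df|\le\lip(f)$ in \eqref{eq:mwug-lip}, and an approximation. A few harmless reductions come first. Since $\h_t$ fixes constants, $\h_t(f+c)=\h_tf+c$, so for the $L^\infty$--statements we may assume $f\in L^\infty(\X)$ positive; and, approximating such an $f$ from below by $f\,\mathbf{1}_{B_n(x_0)}$ (with $x_0\in\X$ fixed) and using monotone convergence inside \eqref{eq:harnack} and \eqref{eq:rapprform}, one sees that \eqref{eq:harnack} actually holds for all $f\in L^\infty(\X)$. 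Fix then a positive $f\in L^\infty(\X)$ and $t>0$. By the strong Feller property $\h_tf$ and $\h_t(f\log f)$ are continuous and bounded, $\h_tf>0$ everywhere (positivity of the density $\hr_t[\cdot]$), and Jensen's inequality for the convex function $u\mapsto u\log u$ against the probability density $\hr_t[x]$ shows that
\[
A(x):=\frac{\h_t(f\log f)(x)}{\h_tf(x)}-\log\h_tf(x)
\]
is non-negative and, by the above, continuous in $x$.

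For \eqref{eq:linfty-lip} I would fix $x\in\X$ and $\lambda>0$, and for $y\ne x$ apply \eqref{eq:harnack} with the exponent $p=1+\lambda\sfd(x,y)$. Taking logarithms, subtracting $p\log\h_tf(y)$, and using $p-1=\lambda\sfd(x,y)$, this gives
\[
\frac{p\,\big(\log\h_tf(x)-\log\h_tf(y)\big)}{\sfd(x,y)}\;\le\;\frac{\log\h_t(f^{p})(y)-p\log\h_tf(y)}{\sfd(x,y)}+\frac{p\,K}{2\lambda\,(e^{2Kt}-1)},
\]
and the analogous bound with $x$ and $y$ interchanged controls $\log\h_tf(y)-\log\h_tf(x)$. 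The crucial ingredient is the expansion
\[
\log\h_t(f^{1+s})(z)-(1+s)\log\h_tf(z)=s\,A(z)+O(s^2)\qquad(s\to0),
\]
\emph{uniformly} for $z$ in a fixed neighbourhood of $x$: since $\partial_s^2 f^{1+s}=f^{1+s}(\log f)^2$ is bounded by a constant for $|s|\le1$, dominated convergence gives $\h_t(f^{1+s})(z)=\h_tf(z)+s\,\h_t(f\log f)(z)+O(s^2)$ with uniform remainder, and $\h_tf\ge\delta_0>0$ near $x$. Substituting, dividing by $\sfd(x,y)$ and letting $y\to x$ (so $p\to1$, $s\to0$, $A(z)\to A(x)$), I expect to obtain
\[
\lip(\log\h_tf)(x)\;\le\;\lambda\,A(x)+\frac{K}{2\lambda\,(e^{2Kt}-1)}\qquad\text{for every }\lambda>0,
\]
where $\tfrac{K}{e^{2Kt}-1}>0$ for all $K\in\R$ (to be read as $\tfrac1{2t}$ when $K=0$). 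Optimising over $\lambda$ gives $\lip(\log\h_tf)(x)^2\le\tfrac{2K}{e^{2Kt}-1}A(x)$; since $\h_tf$ is continuous and strictly positive one has $\lip(\h_tf)(x)=\h_tf(x)\,\lip(\log\h_tf)(x)$, and multiplying through by $\h_tf(x)^2$ produces exactly \eqref{eq:linfty-lip}. For bounded positive $f$ the right-hand side of \eqref{eq:linfty-lip} is bounded on $\X$, hence $\lip(\h_tf)\in L^\infty(\X)$ and $\h_tf\in{\rm LIP}(\X)$; the general $L^\infty$ case then follows from the reduction above.

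For \eqref{eq:quasi-hamilton}, let $f\in L^p(\X)$ be positive with $p\in(1,\infty)$. I would first treat the case where $f$ is moreover bounded with $f,f\log f\in L^1(\X)$ (e.g.\ $f$ bounded with support of finite measure): integrating \eqref{eq:linfty-lip} and using the maximum principle together with the mass-preserving property of $\h_t$, one checks that $\lip(\h_tf)\in L^2(\X)$, so \eqref{eq:mwug-lip} gives $\h_tf\in S^2(\X)$ with $|D\h_tf|\le\lip(\h_tf)$ $\mm$-a.e., and \eqref{eq:linfty-lip} then yields \eqref{eq:quasi-hamilton}. For general positive $f\in L^p(\X)$ I would set $f_n:=\min(f,n)\,\mathbf{1}_{B_n(x_0)}\uparrow f$, apply the previous case to each $f_n$, and pass to the limit: $\h_tf_n\uparrow\h_tf$ pointwise, while $\h_t(f_n\log f_n)\to\h_t(f\log f)$ and $\h_tf_n\log\h_tf_n\to\h_tf\log\h_tf$ pointwise by dominated convergence against $\hr_t[\cdot]$ (using $|u\log u|\le C_p(1+u^p)$ and $\h_t(f^p)<\infty$ $\mm$-a.e., as $f^p\in L^1$). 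Thus the right-hand sides of \eqref{eq:quasi-hamilton} written for the $f_n$ converge $\mm$-a.e.\ to the one for $f$ and remain bounded in $L^1$ on each ball $B_R(x_0)$, and the lower semicontinuity and locality of the minimal weak upper gradient let me transfer \eqref{eq:quasi-hamilton} to $f$.

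The hard part is the limiting argument for \eqref{eq:linfty-lip}: one must let $p\downarrow1$ and $y\to x$ \emph{simultaneously} inside \eqref{eq:harnack}, which is only legitimate if the first-order Taylor expansion of $s\mapsto\log\h_t(f^{1+s})(z)$ is controlled uniformly in $z$ near $x$ — this is exactly where the boundedness of $f$ and the continuity and strict positivity of $\h_tf$ are used, and it is the point where the argument could most easily break down. A lesser, purely technical nuisance is the integrability bookkeeping in the last step, needed to invoke \eqref{eq:mwug-lip} and to justify the passage to the limit for general $L^p$ data.
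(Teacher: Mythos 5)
Your argument is correct and reaches the same estimate, but the key step is carried out along a genuinely different route than the paper. The paper estimates the two one-sided increments $\h_tf(y_n)-\h_tf(x)$ and $\h_tf(x)-\h_tf(y_n)$ \emph{asymmetrically}: one direction is handled by Fatou applied to $\int\frac{f^{1+\delta\sfd_n}-f}{\sfd_n}\hr_t[x]\,\d\mm$, while the other first assumes $f\ge c>0$ (later removed via $f\mapsto f+1/k$) so that $f\Phi(f,\delta\sfd_n)$ converges \emph{uniformly} to $f\log f$, and then invokes the weak $L^1$-convergence $\hr_t[y_n]\weakto\hr_t[x]$ coming from the strong Feller property. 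You instead pass to $\log\h_tf$, which symmetrizes the two directions (just swap $x$ and $y$ in \eqref{eq:harnack}) and funnels the whole estimate through one lemma: the second-order Taylor expansion of $s\mapsto\log\h_t\big(f^{1+s}\big)(z)$ around $s=0$, \emph{uniformly} in $z$ near $x$. Your justification of that expansion is sound: $\partial_s^2 f^{1+s}=f^{1+s}(\log f)^2$ is uniformly bounded on $[0,\|f\|_\infty]$ for $|s|\le 1$, so the remainder in $\h_t(f^{1+s})(z)=\h_tf(z)+s\,\h_t(f\log f)(z)+O(s^2)$ is uniform in $z$, and continuity together with strict positivity of $\h_tf$ near $x$ let you take the logarithm with uniform control. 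This spares you the asymmetric case split and the $f+1/k$ regularization, at the price of needing this uniform expansion rather than Fatou. After that, both proofs conclude identically by optimizing the resulting bound over $\delta$ (your $\lambda$) and multiplying through by $\h_tf(x)^2$. For \eqref{eq:quasi-hamilton} the paper passes from Lipschitz functions with bounded support to general $f\in L^p$ via density and lower semicontinuity of $|D\cdot|$; your truncation $f_n=\min(f,n)\mathbf 1_{B_n(x_0)}$ is equivalent in spirit and needs the same semicontinuity/locality to conclude. Both routes are equally terse there and neither has a gap.

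One small remark: your preliminary reduction that extends \eqref{eq:harnack} from $L^1\cap L^\infty$ to all of $L^\infty$ via monotone convergence is something the paper uses implicitly (it takes $f\in L^\infty(\X)$ positive, ``not necessarily in $L^p$''), so making it explicit is a nice bookkeeping point rather than a deviation.
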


\begin{proof}
Let $x \in \X$, $y_n \to x$ and for sake of brevity set $\sfd_n := \sfd(x,y_n)$; let us also fix $\delta > 0$ and take $f \in L^\infty(\X)$ positive (not necessarily in $L^p(\X)$). Since the function $\Phi : (0,\infty) \times [0,\infty) \to \R$ defined by
\[
\Phi(z,\alpha) := \left\{\begin{array}{ll}
\displaystyle{\frac{z^\alpha-1}{\alpha}} & \qquad \text{ if } \alpha > 0\\
\log z & \qquad \text{ if } \alpha = 0
\end{array}\right.
\]
is continuous, by the weak Feller property of $\h_t$ the function $y \mapsto \Phi(\h_t f(y),\delta\sfd(x,y))$ is continuous as well, whence
\begin{equation}\label{eq:pizbuin}
\limsup_{n \to \infty}\frac{\h_t f(y_n) - \h_t f(x)}{\sfd_n} + \delta(\h_t f\log\h_t f)(x) = \limsup_{n \to \infty}\frac{(\h_t f)^{1+\delta\sfd_n}(y_n) - \h_t f(x)}{\sfd_n}.
\end{equation}
Moreover by \eqref{eq:harnack} we have
\[
\begin{split}
(\h_t & f)^{1+\delta\sfd_n}(y_n) - \h_t f(x) \leq \h_t|f|^{1+\delta\sfd_n}(x)\exp\Big(\frac{K(1+\delta\sfd_n)\sfd_n}{2\delta(e^{2Kt}-1)}\Big) - \h_t f(x) \\
& = \exp\Big(\frac{K(1+\delta\sfd_n)\sfd_n}{2\delta(e^{2Kt}-1)}\Big)\big(\h_t|f|^{1+\delta\sfd_n}(x) - \h_tf(x)\big) + \h_t f(x)\Big(\exp\Big(\frac{K(1+\delta\sfd_n)\sfd_n}{2\delta(e^{2Kt}-1)}\Big) - 1\Big)
\end{split}
\]
and on the one hand
\[
\begin{split}
\limsup_{n \to \infty}&\exp\Big(\frac{K(1+\delta\sfd_n)\sfd_n}{2\delta(e^{2Kt}-1)}\Big)\frac{\h_t|f|^{1+\delta\sfd_n}(x) - \h_tf(x)}{\sfd_n} = \limsup_{n \to \infty}\int\frac{|f|^{1+\delta\sfd_n} - f}{\sfd_n}\hr_t[x]\d\mm \\
& \leq \delta\int f\log f \hr_t[x]\d\mm = \delta\h_t(f\log f)(x),
\end{split}
\]
where the inequality is motivated by Fatou's lemma (indeed $(f\Phi(f,\delta\sfd_n))_n$ is bounded from above, since so is $f$), while on the other hand
\[
\limsup_{n \to \infty}\frac{\h_t f(x)}{\sfd_n}\Big(\exp\Big(\frac{K(1+\delta\sfd_n)\sfd_n}{2\delta(e^{2Kt}-1)}\Big) - 1\Big) = \frac{K}{2\delta(e^{2Kt}-1)}\h_t f(x).
\]
Plugging these observations into \eqref{eq:pizbuin} yields
\begin{equation}\label{eq:pizbuin2}
\limsup_{n \to \infty}\frac{\h_t f(y_n) - \h_t f(x)}{\sfd_n} \leq \delta\h_t(f\log f)(x) - \delta(\h_t f\log\h_t f)(x) + \frac{K}{2\delta(e^{2Kt}-1)}\h_t f(x).
\end{equation}
Next, assume that there exists $c>0$ such that $f \geq c$; since by smoothness $\Phi(z,\alpha)$ converges uniformly to $\log z$ on $[z_0,z_1]$ with $z_0 > 0$ as $\alpha \downarrow 0$, this implies that $f\Phi(f,\delta\sfd_n)$ converges uniformly to $f\log f$ as $n \to \infty$. As on the other hand $\hr[y_n] \rightharpoonup \hr[x]$ in $L^1(\X)$ by the strong Feller property of $\h_t$, this yields
\[
\lim_{n \to \infty}\frac{\h_t f(y_n) - \h_t|f|^{1+\delta\sfd_n}(y_n)}{\sfd_n} = -\delta\h_t(f\log f)(x),
\]
whence
\[
\limsup_{n \to \infty}\frac{\h_t f(x) - \h_t f(y_n)}{\sfd_n} - \delta\h_t(f\log f)(x) = \limsup_{n \to \infty}\frac{\h_t f(x) - \h_t|f|^{1+\delta\sfd_n}(y_n)}{\sfd_n}.
\]
Using again \eqref{eq:harnack} we obtain
\[
\begin{split}
\h_t & f(x) - \h_t|f|^{1+\delta\sfd_n}(y_n) \leq \h_t f(x) - (\h_t f)^{1+\delta\sfd_n}\exp\Big(-\frac{K(1+\delta\sfd_n)\sfd_n}{2\delta(e^{2Kt}-1)}\Big) \\
& \h_t f(x)\Big(1 - \exp\Big(-\frac{K(1+\delta\sfd_n)\sfd_n}{2\delta(e^{2Kt}-1)}\Big)\Big) + \exp\Big(-\frac{K(1+\delta\sfd_n)\sfd_n}{2\delta(e^{2Kt}-1)}\Big)\big(\h_t f(x) - \h_t|f|^{1+\delta\sfd_n}(x)\big)
\end{split}
\]
and on the one hand
\[
\limsup_{n \to \infty} \frac{\h_t f(x)}{\sfd_n}\Big(1 - \exp\Big(-\frac{K(1+\delta\sfd_n)\sfd_n}{2\delta(e^{2Kt}-1)}\Big)\Big) = \frac{K}{2\delta(e^{2Kt}-1)}\h_t f(x)
\]
while on the other hand
\[
\limsup_{n \to \infty}\exp\Big(-\frac{K(1+\delta\sfd_n)\sfd_n}{2\delta(e^{2Kt}-1)}\Big)\frac{\h_t f(x) - \h_t|f|^{1+\delta\sfd_n}(x)}{\sfd_n} = -\delta(\h_t f\log\h_t f)(x),
\]
so that
\begin{equation}\label{eq:pizbuin3}
\limsup_{n \to \infty}\frac{\h_t f(x) - \h_t f(y_n)}{\sfd_n} \leq \delta\h_t(f\log f)(x) + \frac{K}{2\delta(e^{2Kt}-1)}\h_t f(x) - \delta(\h_t f\log\h_t f)(x).
\end{equation}
The assumption $f \geq c$ for some $c>0$ can now be removed, since if we define $f_k := f + 1/k$ then \eqref{eq:pizbuin3} holds for $f_k$ and passing to the limit as $k \to \infty$ yields the validity of \eqref{eq:pizbuin3} for $f$: indeed, the left-hand side is easily seen to be constant w.r.t.\ $k$, while on the right-hand side it is sufficient to use the representation formula \eqref{eq:rapprform} to deduce that $\h_t f_k \to \h_t f$ and $\h_t(f_k\log f_k) \to \h_t(f\log f)$ pointwise.

Combining \eqref{eq:pizbuin3} with \eqref{eq:pizbuin2} and recalling the definition of $\lip(f)$ imply
\[
\lip(\h_t f) \leq \delta\h_t(f\log f) - \delta(\h_t f\log\h_t f) + \frac{K}{2\delta(e^{2Kt}-1)}\h_t f
\]
pointwise, as the right-hand side is continuous by the strong Feller property of $\h_t$, and it is now sufficient to optimize the right-hand side w.r.t.\ $\delta$ to get \eqref{eq:linfty-lip} and the $L^\infty$-${\rm LIP}$ regularization for $\h_t$. In order to prove \eqref{eq:quasi-hamilton} it is sufficient to consider \eqref{eq:linfty-lip} for $f \in {\rm LIP}(\X)$ positive with bounded support, recall \eqref{eq:mwug-lip} and use the density of Lipschitz functions with bounded support in $L^p(\X)$, $p \in (1,\infty)$, together with the lower semicontinuity of the minimal weak upper gradient.
\end{proof}


Finally we prove the integral maximum principle for the heat semigroup. On Riemannian manifolds several different proofs are possible (see for instance \cite{Grigoryan94} and references therein); here we adapt to the metric measure framework the one proposed in \cite{GongWang01}.

\begin{Lemma}[Integral maximum principle]\label{lem:contraction}
With the same assumptions and notations as in Setting \ref{set}, let $T>0,p \in (1,\infty),x \in \X$ and set
\[
\xi_t(y) := -\frac{\sfd^2(x,y)}{2(T-qt)}, \qquad \textrm{for }y \in \X,t \in [0,T/q),
\]
where $q := \frac{p}{2(p-1)}$. Then, for any non-negative $f \in L^p(\X)$ and $t \in [0,T/q)$, it holds
\[
\int (\h_t f)^p e^{\xi_t}\d\mm \leq \int f^p\exp\Big(-\frac{\sfd^2(x,\cdot)}{2T}\Big)\d\mm.
\]
\end{Lemma}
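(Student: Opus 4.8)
The plan is to differentiate the quantity $I(t) := \int (\h_t f)^p e^{\xi_t}\,\d\mm$ in time and show $I'(t) \le 0$, so that $I(t) \le I(0) = \int f^p \exp\big(-\tfrac{\sfd^2(x,\cdot)}{2T}\big)\,\d\mm$. First I would reduce to a regularized setting: replace $f$ by $f + \varepsilon$ truncated and multiplied by a Lipschitz cut-off (using Remark~\ref{rem:cutoff}) so that the integrand has compact support and $\h_t f$ is bounded away from $0$ and $\infty$ on the relevant region; this is where Proposition~\ref{pro:loclogsob}, in the form that $\h_t$ maps $L^\infty$ into $\mathrm{LIP}(\X)$, will be needed to make $\h_t f$ a legitimate Sobolev function against which one can integrate by parts. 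Also $\xi_t$ is Lipschitz with bounded gradient on bounded sets, and $|D\xi_t|^2 = \tfrac{\sfd^2(x,\cdot)}{(T-qt)^2}$ $\mm$-a.e.\ (using $|D\,\sfd(x,\cdot)| \le 1$), while $\partial_t \xi_t = -q\,\tfrac{\sfd^2(x,\cdot)}{2(T-qt)^2} = \tfrac{q}{T-qt}\,\xi_t$, i.e.\ $\partial_t \xi_t = q\,|D\xi_t|^2 \cdot \tfrac{T-qt}{2}\cdot\tfrac{2}{T-qt}$... more usefully $\partial_t\xi_t = \tfrac{q}{2}|D\xi_t|^2$.

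Differentiating and using $\ddt \h_t f = \Delta \h_t f$ together with the integration-by-parts definition of $\Delta$,
\[
I'(t) = \int \Big( p (\h_t f)^{p-1}\Delta(\h_t f)\, e^{\xi_t} + (\h_t f)^p e^{\xi_t}\,\partial_t\xi_t \Big)\d\mm
= -\int \big\langle \nabla(\h_t f),\, \nabla\big(p(\h_t f)^{p-1}e^{\xi_t}\big)\big\rangle\,\d\mm + \int (\h_t f)^p e^{\xi_t}\,\partial_t\xi_t\,\d\mm.
\]
Writing $u := \h_t f$, the Leibniz and chain rules give $\nabla(p u^{p-1}e^{\xi_t}) = p(p-1)u^{p-2}e^{\xi_t}\nabla u + p u^{p-1}e^{\xi_t}\nabla\xi_t$, so
\[
I'(t) = \int e^{\xi_t}\Big( -p(p-1)u^{p-2}|\nabla u|^2 - p u^{p-1}\langle\nabla u,\nabla\xi_t\rangle + u^p\,\partial_t\xi_t\Big)\d\mm.
\]
Now I would substitute $v := u^{p/2}$ up to constants, or just complete the square in $\nabla u$: with $a := p(p-1)u^{p-2}$, $b := p u^{p-1}$, the pointwise integrand (ignoring $e^{\xi_t}>0$) is $-a|\nabla u|^2 - b\langle\nabla u,\nabla\xi_t\rangle + u^p\partial_t\xi_t \le \tfrac{b^2}{4a}|\nabla\xi_t|^2 + u^p\partial_t\xi_t = \tfrac{p^2 u^{2p-2}}{4p(p-1)u^{p-2}}|\nabla\xi_t|^2 + u^p\partial_t\xi_t = \tfrac{p}{4(p-1)}u^p|\nabla\xi_t|^2 + u^p\partial_t\xi_t = u^p\big(\tfrac{q}{2}|\nabla\xi_t|^2 + \partial_t\xi_t\big)$, since $q = \tfrac{p}{2(p-1)}$ gives $\tfrac{p}{4(p-1)} = \tfrac{q}{2}$. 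By the identity $\partial_t\xi_t = -\tfrac q2 |D\xi_t|^2$ noted above (check: $\partial_t\xi_t = -q\sfd^2/\big(2(T-qt)^2\big)$ and $\tfrac q2|D\xi_t|^2 = \tfrac q2 \cdot \sfd^2/(T-qt)^2$, indeed equal with the sign), the bracket vanishes, hence $I'(t)\le 0$.

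The main obstacle is the justification of differentiating under the integral sign and of the integration by parts, since $a priori$ neither $u^{p-1}e^{\xi_t}$ nor $\partial_t(u^p e^{\xi_t})$ is obviously in the right function space on the noncompact space $\X$ — and $e^{\xi_t}\le 1$ helps for the weight but $\xi_t$ itself is unbounded below. I would handle this by the cut-off approximation: fix a Lipschitz $\nchi_R$ as in Remark~\ref{rem:cutoff} equal to $1$ on $B_R(x)$, prove the monotonicity of $I_R(t) := \int (\h_t f)^p e^{\xi_t}\nchi_R\,\d\mm$ up to an error term supported on the annulus $B_{R+1}(x)\setminus B_R(x)$ which is controlled using Lemma~\ref{lem:heatLp} (the uniform-in-$t$ tail estimate \eqref{eq:integrability}) and the fact that $e^{\xi_t}\le e^{-R^2/(2T)}$ there, then let $R\to\infty$; the $L^\infty$–$\mathrm{LIP}$ regularization of Proposition~\ref{pro:loclogsob} ensures $\h_t f\in W^{1,2}_{loc}$ with locally bounded gradient so all the calculus rules apply on each $B_{R+1}(x)$, and the time-differentiation is legitimized by the local absolute continuity of $t\mapsto\h_t f$ in $L^2$ together with $\h_t f\in D(\Delta)$ for $t>0$. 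Finally, one removes the positivity/boundedness reductions on $f$ by monotone/dominated convergence, using that both sides are continuous under the approximation $f_k := \min(f,k) + 1/k$ restricted to balls.
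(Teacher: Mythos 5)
Your high-level strategy coincides with the paper's: reduce to $f$ bounded with bounded support, introduce the Lipschitz cut-off $\nchi_R$ of Remark~\ref{rem:cutoff}, differentiate $I_R(t):=\int\nchi_R(\h_tf)^pe^{\xi_t}\,\d\mm$, integrate by parts, complete the square using $q=\tfrac{p}{2(p-1)}$ and $\partial_t\xi_t+\tfrac q2|\nabla\xi_t|^2\le 0$, and finally let $R\to\infty$ using Lemma~\ref{lem:heatLp} and Proposition~\ref{pro:loclogsob}. The algebra is equivalent (the paper completes the square in $|\nabla\h_tf|/\h_tf$ directly, you use Young's inequality; both reduce to the same discriminant identity).

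The one place you are too quick is the passage $R\to\infty$ when $p\in(1,2)$ — which is precisely the range needed in Theorem~\ref{thm:main}. You propose controlling the annulus error $\int e^{\xi_t}(\h_tf)^{p-1}\langle\nabla\nchi_R,\nabla\h_tf\rangle\,\d\mm$ (and the analogous terms with $(\h_tf)^{p-2}|\nabla\h_tf|^2$) by combining the uniform Lipschitz bound on $\h_tf$ from Proposition~\ref{pro:loclogsob} with the tail estimate \eqref{eq:integrability} and the decay of $e^{\xi_t}$. But after bounding $|\nabla\h_tf|\le\Lip(\h_tf)\le C$ you are left with $\int_{B_{R+1}\setminus B_R}(\h_tf)^{p-1}e^{\xi_t}\,\d\mm$, and for $p<2$ the exponent $p-1<1$ puts this outside the reach of \eqref{eq:integrability}, which is stated only for exponents $\ge 1$; a Hölder interpolation against $\|\h_tf\|_{L^1}$ brings in $\mm(B_{R+1}(x))^{2-p}$, and nothing in Setting~\ref{set} gives you a volume growth bound to beat the factor $e^{-R^2/(2T)}$. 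The paper avoids this by using the sharper pointwise bound \eqref{eq:quasi-hamilton} (not just the $L^\infty$--${\rm LIP}$ statement) to rewrite $(\h_tf)^{p-2}|\nabla\h_tf|^2$ and $(\h_tf)^{p-1}|\nabla\h_tf|$ as combinations of a factor that is uniformly bounded (such as $(\h_tf)^{p-1}e^{\xi_t}$ or $(\h_tf)^{(p-1)/2}|\log\h_tf|e^{\xi_t}$) times a factor to which \eqref{eq:integrability} genuinely applies (such as $\h_t(f\log(f+2))$ or $(\h_tf)^{(p+1)/2}$, both with exponent $\ge 1$). You should incorporate that decomposition; otherwise the limiting step does not close for $p<2$.
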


\begin{proof}
As a preliminary remark, by standard approximation argument it is not restrictive to assume that $f \in L^\infty(\X)$ with bounded support: by Lemma \ref{lem:heatLp} this implies that $(\h_t f) \in C([0,\infty),L^p(\X))$. Hence for all $t \geq 0$ and any sequence $(t_n)$ converging to $t$ there exists $g \in L^p(\X)$ such that, up to pass to a subsequence, $\h_{t_n}f \leq g$ for all $n \in \N$. As $e^{\xi_t} \leq 1$ and $\xi_t$ smoothly depends on $t \in [0,T/q)$, by dominated convergence we deduce that
\[
I(t) := \int (\h_t f)^p e^{\xi_t}\d\mm
\]
is continuous on $[0,T/q)$. In order to see that it is also locally absolutely continuous on $(0,T/q)$, let $R>0$ and take a cut-off function $\nchi_R$ as in Remark \ref{rem:cutoff}. Notice that $(\h_t f) \in AC_{loc}((0,\infty),L^2(\X))$ and, by the maximum principle, it is uniformly bounded in space and time. As $\xi_t$ smoothly depends on $t \in [0,T/q)$ and $\sfd^2(x,\cdot) \in W^{1,2}_{loc}(\X)$, we deduce that $(0,T/q) \ni t \mapsto \nchi_R (\h_t f)^p e^{\xi_t} \in L^2(\X)$ is absolutely continuous. In particular, so is $\int \nchi_R (\h_t f)^p e^{\xi_t}\d\mm$ and it is then clear that
\[
\ddt\int\nchi_R (\h_t f)^p e^{\xi_t}\d\mm = \int\nchi_R\Big(pe^{\xi_t}(\h_t f)^{p-1}\Delta\h_t f - \frac{q\sfd_x^2}{2(T-qt)^2}e^{\xi_t} (\h_t f)^p\Big)\d\mm, \qquad \textrm{a.e. } t
\]
where $\sfd_x := \sfd(x,\cdot)$. As regards the first summand on the right-hand side, using integration by parts and the fact that $|\nabla\sfd_x| \leq 1$ $\mm$-a.e.\ we can rewrite it as
\[
\begin{split}
\int\nchi_R e^{\xi_t}(\h_t f)^{p-1}\Delta\h_t f \d\mm & = -\int\langle\nabla(\nchi_R e^{\xi_t}(\h_t f)^{p-1}),\nabla \h_t f\rangle\d\mm \\ 
& = -\int\nchi_R\Big((p-1)(\h_t f)^{p-2}|\nabla\h_t f|^2 + (\h_t f)^{p-1}\langle\nabla\xi_t,\nabla\h_t f\rangle\Big)e^{\xi_t}\d\mm \\ 
& \qquad -\int e^{\xi_t}(\h_t f)^{p-1} \langle\nabla\nchi_R,\nabla\h_t f\rangle\d\mm \\
& \leq \int \nchi_R\Big(-(p-1)(\h_t f)^{p-2}|\nabla\h_t f|^2 + \frac{\sfd_x}{T-qt}(\h_t f)^{p-1} |\nabla\h_t f|\Big) e^{\xi_t}\d\mm \\
& \qquad -\int (\h_t f)^{p-1}\langle\nabla\nchi_R,\nabla\h_t f\rangle e^{\xi_t}\d\mm
\end{split}
\]
so that
\[
\begin{split}
\ddt\int\nchi_R (\h_t f)^p e^{\xi_t}\d\mm & \leq -p\int \Big((\h_t f)^{p-1}\langle\nabla\nchi_R,\nabla\h_t f\rangle + (p-1)\nchi_R (\h_t f)^{p-2}|\nabla\h_t f|^2\Big) e^{\xi_t}\d\mm \\ 
& \quad + \int \nchi_R\Big(\frac{p\sfd_x}{T-qt} (\h_t f)^{p-1}|\nabla\h_t f| - \frac{q\sfd_x^2}{2(T-qt)^2}(\h_t f)^p \Big) e^{\xi_t}\d\mm.
\end{split}
\]
The fact that $\h_t f \in L^p(\X)$ for all $t \geq 0$, $0 \leq \nchi_R \leq 1$ and $\nchi_R \to 1$ $\mm$-a.e.\ as $R \to \infty$ are sufficient to deduce by dominated convergence
\[
\lim_{R \to \infty} \int\nchi_R (\h_t f)^p e^{\xi_t}\d\mm = \int (\h_t f)^p e^{\xi_t}\d\mm, \qquad \forall t \in (0,T/q).
\]
Moreover, since $\sfd_x^2 e^{\xi_t} \in L^\infty(\X)$ uniformly in $t \in [0,T/q)$, from \eqref{eq:integrability} we see that for all $\eps > 0$ there exists $R$ sufficiently large such that
\[
\int (1-\nchi_R)(\h_t f)^p \sfd_x^2 e^{\xi_t}\d\mm < \eps, \qquad \forall t \in [0,T/q)
\]
whence
\[
\lim_{R \to \infty}\int \frac{\nchi_R\sfd_x^2}{(T-qt)^2} (\h_t f)^p e^{\xi_t}\d\mm = \int \frac{\sfd_x^2}{(T-qt)^2} (\h_t f)^p e^{\xi_t}\d\mm \qquad \textrm{loc. uniformly in } t \in [0,T/q)
\]
by the arbitrariness of $\eps$. Then \eqref{eq:quasi-hamilton} entails that for any $\mathcal{C} \subset (0,T/q)$ compact there exists $C>0$ such that $|\nabla\h_t f|^2 \leq C\h_t f(\h_t(f\log f) - \h_t f\log\h_t f)$ for all $t \in \mathcal{C}$, whence
\[
\begin{split}
(\h_t f)^{p-2}|\nabla\h_t f|^2 e^{\xi_t} & \leq C(\h_t f)^{p-1}\h_t(f\log f)e^{\xi_t} + C(\h_t f)^p|\log\h_t f|e^{\xi_t} \\
& C(\h_t f)^{p-1}\h_t(f\log f)e^{\xi_t} + C(\h_t f)^{\frac{p+1}{2}}(\h_t f)^{\frac{p-1}{2}}|\log\h_t f| e^{\xi_t} =: g_t
\end{split}
\]
On the one hand $(\h_t f)^{p-1}e^{\xi_t}, (\h_t f)^{(p-1)/2}|\log\h_t f| e^{\xi_t} \in L^\infty(\X)$ uniformly in $t \in \mathcal{C}$, since $z \mapsto z^\alpha|\log z|$ is bounded on $[0,\|f\|_{L^\infty(\X)}]$ for all $\alpha > 0$ and $(p-1)/2 > 0$; on the other hand \eqref{eq:integrability} applies to $\h_t(f\log f)$ and $(\h_t f)^{(p+1)/2}$, since for the former the fact that $f \in L^\infty(\X)$ with bounded support entails $f\log f \in L^1 \cap L^\infty(\X)$ while for the latter $(p+1)/2 \geq 1$. Thus, arguing as above, by \eqref{eq:integrability} we have that for all $\eps > 0$ there exists $R$ large enough so that
\[
\begin{split}
\int (1-\nchi_R)(\h_t f)^{p-2}|\nabla\h_t f|^2 e^{\xi_t}\d\mm & = \int_{\X \setminus B_R(x)}(1-\nchi_R)(\h_t f)^{p-2}|\nabla\h_t f|^2 e^{\xi_t}\d\mm \\
& \leq \int_{\X \setminus B_R(x)} g_t\,\d\mm < \eps, \qquad \forall t \in [0,T/q),
\end{split}
\]
whence
\[
\lim_{R \to \infty}\int\nchi_R (\h_t f)^{p-2}|\nabla\h_t f|^2 e^{\xi_t}\d\mm = \int (\h_t f)^{p-2}|\nabla\h_t f|^2 e^{\xi_t}\d\mm \qquad \textrm{loc. uniformly in } t \in (0,T/q).
\]
Finally, since $f\log f \leq f\log(f+2)$ and $f\log(f+2) > 0$ as $f > 0$, by \eqref{eq:quasi-hamilton} we have that for any $\mathcal{C} \subset (0,T/q)$ compact there exists $C>0$ such that
\[
\begin{split}
(\h_t f)^{p-1}|\nabla\h_t f| & \leq C(\h_t f)^{p-1/2}\sqrt{\h_t(f\log(f+2)) - \h_t f\log\h_t f} \\
& \leq C\frac{(\h_t f)^{p-1/2}}{\sqrt{\h_t(f\log(f+2))}}\h_t(f\log(f+2)) + C(\h_t f)^p|\log\h_t f| \\
& \leq \frac{C}{\log 2}(\h_t f)^{p-1}\h_t(f\log(f+2)) + C(\h_t f)^p|\log\h_t f|
\end{split}
\]
for all $t \in \mathcal{C}$, so that arguing as above, using the fact that $\sfd_x e^{\xi_t} \in L^\infty(\X)$ uniformly in $t \in [0,T/q)$ and $|\nabla\nchi_R| \in L^\infty(\X)$ uniformly in $R>0$ with $\nchi_R,|\nabla\nchi_R|$ converging $\mm$-a.e.\ to 0,1 respectively, we see that
\[
\begin{split}
& \lim_{R \to \infty} \int (\h_t f)^{p-1}\langle\nabla\nchi_R,\nabla\h_t f\rangle e^{\xi_t}\d\mm = 0, \\
& \lim_{R \to \infty} \int \nchi_R\frac{p\sfd_x}{T-qt} (\h_t f)^{p-1}|\nabla\h_t f| e^{\xi_t}\d\mm = \int \frac{p\sfd_x}{T-qt} (\h_t f)^{p-1}|\nabla\h_t f| e^{\xi_t}\d\mm,
\end{split}
\]
locally uniformly in $t \in (0,T/q)$. We thus obtain that $I \in AC_{loc}((0,T/q))$ with
\[
\begin{split}
I'(t) & \leq \int \Big(-p(p-1)(\h_t f)^{p-2}|\nabla\h_t f|^2 + \frac{p\sfd_x}{T-qt} (\h_t f)^{p-1}|\nabla\h_t f| - \frac{q\sfd_x^2}{2(T-qt)^2} (\h_t f)^p\Big) e^{\xi_t}\d\mm \\ 
& = - p(p-1)\int (\h_t f)^p\Big(\frac{|\nabla\h_t f|}{\h_t f} - \frac{\sfd_x}{2(p-1)(T-t)}\Big)^2 e^{\xi_t}\d\mm \leq 0
\end{split}
\]
and since $I$ is continuous up to $t=0$, this yields the conclusion.
\end{proof}

\section{Main result and applications}\label{sec:4}

We are now in the position to prove the upper Gaussian estimate for the heat kernel, the proof being partially inspired by \cite{Grigoryan94b} and \cite{GongWang01}.

\begin{Theorem}[Gaussian upper bound]\label{thm:main}
With the same assumptions and notations as in Setting \ref{set}, for any $\eps > 0$ there exist constants $C_\eps > 0$ and $C_K \geq 0$ such that, for every $x,y \in \X$ and $t > 0$, it holds
\begin{equation}\label{eq:upperbound}
\hr_t[x](y) \leq \frac{1}{\sqrt{\mm(B_{\sqrt{t}}(x))\mm(B_{\sqrt{t}}(y))}}\exp\Big( C_\eps(1+C_K t) - \frac{\sfd^2(x,y)}{(4+\eps)t}\Big).
\end{equation}
If $K \geq 0$, then $C_K$ can be chosen equal to 0.
\end{Theorem}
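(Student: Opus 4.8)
The plan is to run a Grigor'yan-type argument on top of the three auxiliary results: extract an on-diagonal upper bound from the Harnack inequality, upgrade it to a Gaussian-free off-diagonal bound via Chapman--Kolmogorov, and finally insert the Gaussian factor using the integrated maximum principle. For the \emph{on-diagonal bound} I would apply \eqref{eq:harnack} with $p=2$ to a nonnegative $g\in L^1\cap L^\infty(\X)$, getting $(\h_t g(x))^2\le \h_t(g^2)(z)\exp\!\big(\tfrac{K\sfd^2(x,z)}{e^{2Kt}-1}\big)$ for every $z\in\X$; integrating this over $z\in B_r(x)$ and using that $\h_t$ is mass preserving on $L^1$ yields $(\h_t g(x))^2\le \frac{1}{\mm(B_r(x))}\exp\!\big(\tfrac{Kr^2}{e^{2Kt}-1}\big)\|g\|_{L^2}^2$ for every $r>0$. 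Taking $g=\min(\hr_t[x],n)$ and letting $n\to\infty$ — so that, by monotone convergence and the representation formula \eqref{eq:rapprform}, both $\h_t g(x)$ and $\|g\|_{L^2}^2$ increase to $\hr_{2t}[x](x)$ — and relabelling, one obtains, for every $M>0$,
\begin{equation}\label{eq:ondiag}
\hr_t[x](x)\le \frac{\exp\big(M(1+K^-t)\big)}{\mm(B_{\sqrt{Mt}}(x))},\qquad K^-:=\max\{-K,0\},
\end{equation}
the term $K^-t$ disappearing when $K\ge 0$ since $\tfrac{Ks}{e^{Ks}-1}\le 1$ there; the free parameter $M$ is what will eventually produce the clean radius $\sqrt t$ (rather than $\sqrt{\eps t}$) in \eqref{eq:upperbound}.

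Next, by \eqref{eq:chapkol}, symmetry of the kernel and Cauchy--Schwarz, $\hr_t[x](y)=\langle\hr_{t/2}[x],\hr_{t/2}[y]\rangle_{L^2}\le \|\hr_{t/2}[x]\|_{L^2}\|\hr_{t/2}[y]\|_{L^2}=\sqrt{\hr_t[x](x)\,\hr_t[y](y)}$, so \eqref{eq:ondiag} already gives \eqref{eq:upperbound} with $\sfd^2(x,y)$ replaced by $0$. It remains to gain the factor $\exp(-\sfd^2(x,y)/(4+\eps)t)$, and this is where Lemma \ref{lem:contraction} enters.

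For the \emph{Gaussian factor} I would first turn Lemma \ref{lem:contraction} (with $p=2$, hence $q=1$) into a sharp localized Davies--Gaffney estimate: restricting its inequality to a ball $B_{R'}(x)$, using $e^{\xi_s}\ge e^{-{R'}^2/2(\mathcal T-s)}$ there, taking $f$ supported in $\X\setminus B_R(x)$ so that the right-hand side is at most $e^{-R^2/2\mathcal T}\|f\|_{L^2}^2$, and optimising over the free time parameter $\mathcal T$, one gets $\int_{B_{R'}(x)}(\h_s f)^2\,\d\mm\le e^{-(R-R')^2/2s}\|f\|_{L^2}^2$ for $\supp f\subset\X\setminus B_R(x)$, $0<R'<R$, together with (by self-adjointness of $\h_s$) its dual with the two regions interchanged. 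Then I would write $\hr_t[x](y)=\langle\hr_{t/2}[x],\hr_{t/2}[y]\rangle$, split $\hr_{t/2}[y]$ into its restrictions to $B_\rho(y)$ and to $\X\setminus B_\rho(y)$ with $0<\rho<\sfd(x,y)$, estimate each resulting product by Cauchy--Schwarz, and bound one factor of each product by the localized Davies--Gaffney estimate (iterating the near/far splitting of the heat kernel at intermediate times so as to reduce everything to $L^2$-norms over balls) and the other by \eqref{eq:ondiag}. Choosing $\rho$, the aperture $M$ in \eqref{eq:ondiag} and the splitting parameters appropriately produces \eqref{eq:upperbound} with exponent $C_\eps(1+K^-t)-\sfd^2(x,y)/(4+\eps)t$ — the constant $4+\eps$, rather than a worse one, reflecting the sharpness of Lemma \ref{lem:contraction} — and setting $C_K$ equal to the resulting multiple of $K^-$ (so $C_K=0$ when $K\ge 0$) finishes the proof.

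The main obstacle is this last step: squeezing the exponent down to $4+\eps$ while simultaneously ending up with the radius $\sqrt t$ forces a delicate balancing of $M$, $\rho$ and the Davies--Gaffney parameters, and — since no doubling of $\mm$ is assumed — all manipulations involving $\mm(B_\rho(\cdot))$ have to be carried out after an initial localization to a large bounded set (legitimate because $\mm$ is finite on bounded sets and because of the $L^p$-continuity and integrability statements of Section \ref{sec:3}, in particular \eqref{eq:integrability}).
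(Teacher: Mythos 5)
Your three auxiliary ingredients are all correctly derived --- the on-diagonal bound obtained by integrating the Harnack inequality over a ball, the duality/Chapman--Kolmogorov identity, and the localized Davies--Gaffney estimate extracted from Lemma \ref{lem:contraction} with $p=2$, $q=1$ --- and the overall program is the classical Grigor'yan scheme for turning an on-diagonal bound plus $L^2$-off-diagonal decay into a pointwise Gaussian upper bound. But the step you yourself flag as ``the main obstacle,'' the iterated near/far splitting at intermediate times, is precisely where the scheme stalls in the present generality. Every version of Grigor'yan's iteration that delivers a radius $\sqrt t$ and a leading constant arbitrarily close to $4$ relies on a regularity hypothesis for the on-diagonal profile $\gamma(x,t)$, essentially $\gamma(x,t)\le A\,\gamma(x,\delta t)$ for some fixed $\delta\in(0,1)$; with $\gamma(x,t)\simeq\mm(B_{\sqrt{Mt}}(x))$ this is a volume-doubling condition that Setting \ref{set} deliberately does not assume (indeed, on weighted manifolds and $\RCD(K,\infty)$ spaces the measure can fail to be doubling). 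Localizing to a large bounded set does not cure this: the iteration produces volume factors at a decreasing sequence of radii and, without comparability, these do not telescope into a single $\mm(B_{\sqrt t}(\cdot))$ factor while simultaneously squeezing the exponent down to $4+\eps$. So as written the proposal leaves a genuine gap at its central step.

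The paper avoids the iteration entirely by aiming straight at the Gaussian-weighted $L^2$ norm $E_D(x,t)=\int \hr_t[x]^2\,e^{\sfd^2(x,\cdot)/Dt}\,\d\mm$, rather than first establishing an unweighted on-diagonal bound and then trying to graft the Gaussian factor onto it. Concretely: take the Harnack inequality with exponent $2/p$, raise it to the power $p$, multiply by the weight $e^{\xi_t}$ appearing in Lemma \ref{lem:contraction} and integrate in $y$. Lemma \ref{lem:contraction} controls the resulting right-hand side by $\int f^2 e^{-\sfd^2(x,\cdot)/2T}\d\mm$, while restricting the left-hand side to the single ball $B_{\sqrt{2t}}(x)$ produces exactly one volume factor --- no iteration and no volume comparison are needed. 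Choosing $f=\hr_t[x]\,e^{\sfd^2(x,\cdot)/2T}$ (approximated monotonically) makes both sides become powers of $E_D(x,t)$, so the estimate closes in one pass; tuning $p$ and $T$ then gives the $4+\eps$ constant. If you want to complete your argument you should replace the near/far iteration by this direct $E_D$ estimate; your Davies--Gaffney computation is essentially a special case of it ($p=2$) and your on-diagonal bound corresponds to dropping the Gaussian weight, which is exactly the information you cannot afford to throw away without doubling.
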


\begin{proof}
As a first step, for any $x \in \X$ and $t,D > 0$ define
\[
E_D(x,t) := \int (\hr_t[x](y))^2\exp\Big(\frac{\sfd^2(x,y)}{Dt}\Big)\d\mm(y) \in [0,\infty]
\]
and observe that by \eqref{eq:chapkol} and the triangle inequality
\[
\hr_t[x](y) = \exp\Big(-\frac{\sfd^2(x,y)}{2Dt}\Big) \int \hr_{t/2}[x](z)\exp\Big(\frac{\sfd^2(x,z)}{Dt}\Big)\hr_{t/2}[z](y)\exp\Big(\frac{\sfd^2(z,y)}{Dt}\Big)\d\mm(z),
\]
whence by the Cauchy-Schwarz inequality
\begin{equation}\label{eq:roughbound}
\hr_t[x](y) \leq \sqrt{E_D(x,t/2)E_D(y,t/2)}\exp\Big(-\frac{\sfd^2(x,y)}{2Dt}\Big).
\end{equation}
In order to estimate $E_D(x,t/2)$ and $E_D(y,t/2)$, let $f \in L^1 \cap L^\infty(\X)$ be non-negative, $T>0$ and $p \in (1,2)$, consider the Harnack inequality \eqref{eq:harnack} with exponent $2/p$, raise it to the power $p$, multiply both sides by $e^{\xi_t}$ with $\xi_t$ defined as in Lemma \ref{lem:contraction} and integrate w.r.t.\ $\mm$ in the $y$ variable to get
\[
\int (\h_t f(x))^2 e^{\xi_t(y)}\exp\Big(-\frac{pK\sfd^2(x,y)}{(2-p)(e^{2Kt}-1)}\Big)\d\mm(y) \leq \int (\h_t(f^{2/p})(y))^p e^{\xi_t(y)}\d\mm(y).
\]
On the one hand, as $f^{2/p} \in L^p(\X)$ Lemma \ref{lem:contraction} can be applied, whence
\[
\int (\h_t(f^{2/p}))^p e^{\xi_t}\d\mm \leq \int f^2 \exp\Big(-\frac{\sfd^2(x,\cdot)}{2T}\Big)\d\mm, \qquad \forall t \in [0,T/q)
\]
with $q$ also defined as in Lemma \ref{lem:contraction}. On the other hand
\[
\begin{split}
\int (\h_t f(x))^2 & e^{\xi_t(y)}\exp\Big(-\frac{pK\sfd^2(x,y)}{(2-p)(e^{2Kt}-1)}\Big)\d\mm(y) \\ 
& \geq (\h_t f(x))^2 \int_{B_{\sqrt{2t}}(x)} e^{\xi_t(y)}\exp\Big(-\frac{pK\sfd^2(x,y)}{(2-p)(e^{2Kt}-1)}\Big)\d\mm(y) \\
& \geq (\h_t f(x))^2 \exp\Big(-\frac{t}{T-qt} - \frac{2pKt}{(2-p)(e^{2Kt}-1)}\Big)\mm(B_{\sqrt{2t}}(x)).
\end{split}
\]
By combining these inequalities we obtain
\begin{equation}\label{eq:fn}
(\h_t f(x))^2 \leq \frac{1}{\mm(B_{\sqrt{2t}}(x))}\exp\Big(\frac{t}{T-qt} + \frac{2pKt}{(2-p)(e^{2Kt}-1)}\Big)\int f^2 \exp\Big(-\frac{\sfd^2(x,\cdot)}{2T}\Big)\d\mm
\end{equation}
for all $t \in (0,T/q)$ and in particular this is true when $f$ is equal to
\[
f_n := (n \wedge \hr_t[x])\exp\Big(n \wedge \frac{\sfd^2(x,\cdot)}{2T}\Big),
\]
where $a \wedge b := \min\{a,b\}$. By the representation formula \eqref{eq:rapprform} and by the monotone convergence theorem, as the $f_n$'s are monotonically increasing, we deduce that \eqref{eq:fn} actually holds with $f = \hr_t[x]e^{\sfd^2(x,\cdot)/2T}$ and this is equivalent to
\begin{equation}\label{eq:halfbound}
\int (\hr_t[x])^2 \exp\Big(\frac{\sfd^2(x,\cdot)}{2T}\Big)\d\mm \leq \frac{1}{\mm(B_{\sqrt{2t}}(x))}\exp\Big(\frac{t}{T-qt} + \frac{2pKt}{(2-p)(e^{2Kt}-1)}\Big)
\end{equation}
for all $t \in (0,T/q)$. Recalling that $q := \frac{p}{2(p-1)}$ and $p \in (1,2)$ is arbitrary, we observe that for all $D>2$ there exists $p'$ sufficiently close to 2 such that $D > 2q' > 2$ with $q' = \frac{p'}{2(p'-1)}$. Hence, since also $T$ is arbitrary, if we pick any $t > 0$ and set $T := Dt/2$, in such a way that $t = 2T/D < T/q'$, we deduce that \eqref{eq:halfbound} holds and
\[
\frac{t}{T-q't} = \frac{2}{D - 2q'} =: C_D,
\]
so that we have just shown that for all $D>2$ there exists $p' \in (1,2)$ and $C_D > 0$ both depending only on $D$ such that
\[
\int (\hr_t[x])^2 \exp\Big(\frac{\sfd^2(x,\cdot)}{Dt}\Big)\d\mm \leq \frac{1}{\mm(B_{\sqrt{2t}}(x))}\exp\Big(C_D + \frac{2p'Kt}{(2-p')(e^{2Kt}-1)}\Big), \qquad \forall t > 0.
\]
Now observe that
\[
\frac{2p'Kt}{(2-p')(e^{2Kt}-1)} \leq C'_D(1+C_K t), \qquad \forall t > 0
\]
for suitable constants $C'_D,C_K$ and it is easily seen that $C_K$ can be chosen equal to 0 when $K \geq 0$. Plugging this inequality into the previous one and recalling the definition of $E_D(x,t)$ it follows that for any $D>2$
\[
E_D(x,t) \leq \frac{1}{\mm(B_{\sqrt{2t}}(x))}\exp\Big(C''_D(1+C_K t)\Big), \qquad \forall t > 0
\]
for some $C''_D>0$ depending only on $D$ and combining this inequality with \eqref{eq:roughbound} the bound \eqref{eq:upperbound} follows.
\end{proof}

\begin{Remark}{\rm
In the case of smooth Riemannian manifolds the Varadhan asymptotic formula for short-time behaviour of the heat kernel states that
\[
\lim_{t \downarrow 0} t\log\hr_t[x](y) = -\frac{\sfd^2(x,y)}{4}, \qquad x \neq y.
\]
Hence \eqref{eq:upperbound} is sharp for short time.
}\fr
\end{Remark}

In the same spirit of the remark above we can formulate the following

\begin{Corollary}
With the same assumptions and notations as in Setting \ref{set}, if we further suppose that for any $x \in \X$ there exist constants $R_x,c_x > 0$, $0 < \alpha_x < 2$ such that
\[
\mm(B_r(x)) \geq c_x\exp(-1/r^{\alpha_x}), \qquad r < R_x,
\]
then the upper Varadhan estimate holds
\[
\lim_{t \downarrow 0} t\log\hr_t[x](y) \leq -\frac{\sfd^2(x,y)}{4}
\]
for all $x,y \in \X$, $x \neq y$.
\end{Corollary}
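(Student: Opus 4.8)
The plan is to read the estimate off directly from Theorem~\ref{thm:main}. Fix $x,y \in \X$ with $x \neq y$ and fix $\eps > 0$. Taking logarithms in \eqref{eq:upperbound} and multiplying by $t > 0$ gives
\[
t\log\hr_t[x](y) \leq -\frac{t}{2}\log\mm(B_{\sqrt t}(x)) - \frac{t}{2}\log\mm(B_{\sqrt t}(y)) + tC_\eps(1 + C_K t) - \frac{\sfd^2(x,y)}{4+\eps},
\]
so the whole matter reduces to a careful passage to the limit $t \downarrow 0$ in the right-hand side, followed by $\eps \downarrow 0$.

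First I would dispose of the trivial contributions: $tC_\eps(1 + C_K t) \to 0$ as $t \downarrow 0$, while $-\sfd^2(x,y)/(4+\eps)$ does not depend on $t$. The only genuinely delicate term is the one coming from the volumes, and this is exactly where the hypothesis enters. For $t$ small enough that $\sqrt t < \min\{R_x, R_y\}$ we have $\mm(B_{\sqrt t}(x)) \geq c_x\exp(-t^{-\alpha_x/2})$, hence $\log\mm(B_{\sqrt t}(x)) \geq \log c_x - t^{-\alpha_x/2}$; multiplying by $-t/2 < 0$ yields
\[
-\frac{t}{2}\log\mm(B_{\sqrt t}(x)) \leq -\frac{t}{2}\log c_x + \frac12\, t^{1 - \alpha_x/2}.
\]
Since $0 < \alpha_x < 2$ forces the exponent $1 - \alpha_x/2$ to be strictly positive, the right-hand side tends to $0$ as $t \downarrow 0$; the analogous estimate holds with $y$ in place of $x$. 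Combining these observations gives $\limsup_{t \downarrow 0} t\log\hr_t[x](y) \leq -\sfd^2(x,y)/(4+\eps)$.

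Finally, since $\eps > 0$ is arbitrary and the left-hand side is independent of $\eps$, letting $\eps \downarrow 0$ produces $\limsup_{t \downarrow 0} t\log\hr_t[x](y) \leq -\sfd^2(x,y)/4$, which is the asserted inequality (the symbol $\lim$ in the statement being understood, as customary, as the one-sided upper Varadhan bound for the $\limsup$). There is no serious obstacle here; the only two points requiring attention are the order of the two limits --- $t \downarrow 0$ first and then $\eps \downarrow 0$ --- and the observation that the condition $\alpha_x < 2$ is precisely what guarantees $t^{1 - \alpha_x/2} \to 0$, i.e.\ that even an exponentially small lower bound on the volume of small balls is too slow to spoil the Gaussian rate $-\sfd^2(x,y)/4$.
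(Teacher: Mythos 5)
Your proof is correct, and since the paper states this corollary without a written proof (presenting it as a direct consequence of Theorem \ref{thm:main}), your direct derivation --- substituting $r=\sqrt t$ into the volume hypothesis, observing that $0<\alpha_x<2$ makes $t\cdot t^{-\alpha_x/2}=t^{1-\alpha_x/2}\to 0$, and then sending $\eps\downarrow 0$ after $t\downarrow 0$ --- is exactly the intended argument. Your remark that the stated $\lim$ should be read as a $\limsup$ upper bound is also the right reading of the statement.
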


As a direct consequence of Proposition \ref{pro:loclogsob} and of our main theorem we obtain the local logarithmic Sobolev inequality and an upper Gaussian estimate for the heat kernel in $\RCD(K,\infty)$ spaces, which to date were both still missing. For reader's sake, let us recall that $(\X,\sfd,\mm)$ satisfies the $\RCD(K,\infty)$ condition (see \cite{AmbrosioGigliSavare11-2}) if it is infinitesimally Hilbertian and the relative entropy functional ${\rm Ent}_\mm : \prob\X \to \R \cup \{+\infty\}$ defined as
\[
\textrm{Ent}_\mm(\mu) := \left\{\begin{array}{ll}
\displaystyle{\int\rho\log(\rho)\,\d\mm} & \qquad \text{ if } \mu = \rho\mm \textrm{ and } (\rho\log(\rho))^- \in L^1(\X) \\
+\infty & \qquad \text{ otherwise}
\end{array}\right.
\]
is $K$-convex in $(\probt\X,W_2)$, namely for every $\mu,\nu \in \probt\X$ with ${\rm Ent}_\mm(\mu), {\rm Ent}_\mm(\nu) < \infty$ there exists a $W_2$-geodesic $(\mu_t)$ with $\mu_0 = \mu$, $\mu_1 = \nu$ and such that
\[
{\rm Ent}_\mm(\mu_t) \leq (1-t){\rm Ent}_\mm(\mu) + t{\rm Ent}_\mm(\nu) - \frac{K}{2}t(1-t)W_2^2(\mu,\nu), \qquad \forall t \in [0,1].
\]
In this class of spaces, H.\ Li proved that the dimension-free Harnack inequality \eqref{eq:harnack} holds \cite{Li16}, so that we can state the following

\begin{Corollary}
Let $(\X,\sfd,\mm)$ be an $\RCD(K,\infty)$ space with $K \in \R$. Then 
\begin{itemize}
\item[(i)] for any $f \in L^p(\X)$ positive, $p \in (1,\infty)$, and for any $t>0$ it holds
\[
\frac{e^{2Kt}-1}{2K} \frac{|\nabla\h_t f|^2}{\h_t f} \leq \h_t(f\log f) - \h_t f\log\h_t f \qquad \mm\textrm{-a.e.}
\]
\item[(ii)] for any $\eps > 0$ there exist constants $C_\eps > 0$ and $C_K \geq 0$ such that, for every $x,y \in \X$ and $t > 0$, it holds
\[
\hr_t[x](y) \leq \frac{1}{\sqrt{\mm(B_{\sqrt{t}}(x))\mm(B_{\sqrt{t}}(y))}}\exp\Big( C_\eps(1+C_K t) - \frac{\sfd^2(x,y)}{(4+\eps)t}\Big).
\]
If $K \geq 0$, then $C_K$ can be chosen equal to 0.
\end{itemize}
\end{Corollary}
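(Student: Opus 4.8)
The plan is to verify that every $\RCD(K,\infty)$ space, $K\in\R$, is an instance of the framework of Setting \ref{set}; once this is established, part (i) follows from Proposition \ref{pro:loclogsob} after an elementary rearrangement, and part (ii) is literally the statement of Theorem \ref{thm:main}.

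To check Setting \ref{set} I would proceed in three steps. First, by definition an $\RCD(K,\infty)$ space is a complete and separable metric measure space whose reference measure is Borel and finite on bounded sets and which, in addition, is infinitesimally Hilbertian in the sense of \cite{Gigli12}; hence all the structure recalled in Section \ref{sec:2} (the differential, the carr\'e du champ, the Laplacian, and the calculus rules of \cite{Gigli14}) is available. Second, the dimension-free Harnack inequality \eqref{eq:harnack}, with the same $K$ as in the curvature bound, holds on every $\RCD(K,\infty)$ space by the theorem of H.\ Li \cite{Li16}. Third, on such spaces the heat semigroup $(\h_t)$ generated by the Cheeger energy admits the integral representation \eqref{eq:rapprform} against a strictly positive kernel $\hr_t[x](\cdot)$ which, for fixed $t>0$ and $x\in\X$, is a probability density; this belongs to the standard heat-kernel theory for $\RCD$ spaces (see the constructions in \cite{AmbrosioGigliSavare11-2}). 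Combining these three facts, $(\X,\sfd,\mm)$ fits into Setting \ref{set}.

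Granting this, part (ii) is nothing but \eqref{eq:upperbound} in Theorem \ref{thm:main}. For part (i) I would start from \eqref{eq:quasi-hamilton}, which for positive $f\in L^p(\X)$, $p\in(1,\infty)$, and $t>0$ reads $|D\h_t f|^2 \leq \frac{2K}{e^{2Kt}-1}\,\h_t f\,\big(\h_t(f\log f)-\h_t f\log\h_t f\big)$ $\mm$-a.e. Since the space is infinitesimally Hilbertian one has $|D\h_t f|=|\nabla\h_t f|$ $\mm$-a.e.; the factor $\frac{2K}{e^{2Kt}-1}$ is strictly positive for every $K\in\R$ (numerator and denominator share the same sign, the quotient being understood as $1/t$ when $K=0$); and $f>0$ together with \eqref{eq:rapprform} and the strict positivity of the kernel forces $\h_t f>0$ $\mm$-a.e. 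Dividing both sides of \eqref{eq:quasi-hamilton} by $\frac{2K}{e^{2Kt}-1}\,\h_t f$ then yields $\frac{e^{2Kt}-1}{2K}\,\frac{|\nabla\h_t f|^2}{\h_t f} \leq \h_t(f\log f)-\h_t f\log\h_t f$ $\mm$-a.e., which is exactly (i).

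The real analytic content of the corollary is entirely contained in Proposition \ref{pro:loclogsob} and Theorem \ref{thm:main}, so this concluding step presents no genuine obstacle. The one point that deserves care is the third item above, namely that the heat semigroup on an $\RCD(K,\infty)$ space is kernel-represented in the sense of \eqref{eq:rapprform}: this is the only ingredient that is neither part of the definition of $\RCD(K,\infty)$ nor immediately quotable from \cite{Li16}, and it is precisely what makes Setting \ref{set} apply.
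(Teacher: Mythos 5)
Your proof is correct and takes essentially the same approach as the paper, which simply invokes Li's Harnack inequality on $\RCD(K,\infty)$ spaces together with the facts that such spaces are complete, separable, infinitesimally Hilbertian with a Borel measure finite on bounded sets and admit a strictly positive heat kernel, so that Proposition \ref{pro:loclogsob} and Theorem \ref{thm:main} apply verbatim. Your write-up is in fact more explicit than the paper about verifying all the hypotheses of Setting \ref{set} --- in particular the existence of the kernel representation --- and correctly handles the elementary rearrangement in (i), including the sign of $\tfrac{2K}{e^{2Kt}-1}$ and the strict positivity of $\h_t f$.
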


\bibliographystyle{siam}
{\small
\bibliography{biblio}}

\end{document}